\theoremstyle{definition}
\theoremstyle{theorem}
\newtheorem{theorem}{Theorem}[section]
\def\pam{\par\medskip}
\def\header#1#2#3#4#5{
    \markboth{#3}{#4}
    \setcounter{page}{#1}
    \title{#5}
    \author{#3}
    \date{}
    \mymaketitle{#1-#2}}
\newenvironment{abstractv}{\begin{quote}{\bf Abstract.\ }}{\end{quote}}
\def\msc{{\bf M.S.C. 2010}:\ }
\def\kwd{\\{\bf Key words}:\ }
\def\aua{\par\noindent{\em Author's address:}\pam\noindent}
\def\bece{\begin{center}}
\def\eece{\end{center}}
\def\bebi{}
\makeatletter \@addtoreset{equation}{section} \makeatother
\def\mymaketitle#1{
\begin{document}\maketitle\thispagestyle{empty}}
%=========== HERE one can activate the line counter 2 =============
\def\babs{%\linenumbers
    \begin{abstractv}}
\def\eabs{\end{abstractv}}
%==================================================================

\header{1}{36}{M. Celli}
    {}
    {Vectors and a half-disk of triangle shapes in Ionescu-Weitzenb\"{o}ck's inequality}
		
\babs
The aim of this note is to give two new conceptual proofs of Ionescu-Weitzenb\"{o}ck's inequality. The first one, which is a vector proof, provides us a geometric interpretation of the difference between the two sides of this inequality and of two known corollary inequalities in differential geometry. Through a less classical approach, our second proof makes use of the properties of the set of triangles with a fixed ``size'', seen as a half-disk.
\eabs
\msc
    51M16.
\kwd
    Ionescu-Weitzenb\"{o}ck's inequality; sets of triangle shapes.
		
\section{Introduction}

Ionescu-Weitzenb\"{o}ck's classical inequality compares the area of a triangle and the sum of the squares of its sides:

\begin{theorem}
\label{theo1}
(Ionescu-Weitzenb\"{o}ck's inequality.) If \(a\), \(b\), \(c\) are the sides of a trian\-gle and
\(\Delta \) is its area, we have:
\[a^2+b^2+c^2\ge 4\sqrt{3}\Delta ,\]
with equality if \(a=b=c\).
\end{theorem}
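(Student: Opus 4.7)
The plan is to reduce the inequality to a single-variable trigonometric estimate by eliminating $c$ using the law of cosines and the sine area formula. Let $C$ be the angle opposite side $c$. Then $c^{2}=a^{2}+b^{2}-2ab\cos C$ and $\Delta=\tfrac12 ab\sin C$, so the quantity I want to show is nonnegative becomes
\[
a^{2}+b^{2}+c^{2}-4\sqrt{3}\,\Delta \;=\; 2a^{2}+2b^{2}-2ab\bigl(\cos C+\sqrt{3}\sin C\bigr).
\]
The point of this reformulation is that the two sides $a,b$ and the included angle $C$ are free parameters, while the third side has been absorbed.

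Next I would recognise the bracketed combination as a single sine: since $\tfrac12\cos C+\tfrac{\sqrt3}{2}\sin C=\sin(C+\pi/6)$, the right-hand side equals
\[
2a^{2}+2b^{2}-4ab\sin\!\bigl(C+\pi/6\bigr).
\]
Using the crude bound $\sin(C+\pi/6)\le 1$ and then the arithmetic-geometric mean inequality $a^{2}+b^{2}\ge 2ab$ yields
\[
2a^{2}+2b^{2}-4ab\sin\!\bigl(C+\pi/6\bigr)\;\ge\;2a^{2}+2b^{2}-4ab\;=\;2(a-b)^{2}\;\ge\;0,
\]
which is the desired inequality.

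For the equality case I would trace back both steps: equality in $\sin(C+\pi/6)\le 1$ forces $C=\pi/3$, and equality in the AM-GM step forces $a=b$. Together these give an isoceles triangle with apex angle $\pi/3$, hence equilateral, so indeed $a=b=c$ characterises equality. The only step that requires a moment of care is the trigonometric regrouping into $\sin(C+\pi/6)$; everything else is an elementary inequality. I do not anticipate a genuine obstacle, since after the law-of-cosines substitution the problem collapses to two standard bounds chained together.
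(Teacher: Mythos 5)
Your argument is correct: the substitution via the law of cosines and $\Delta=\tfrac12 ab\sin C$, the regrouping $\cos C+\sqrt3\sin C=2\sin(C+\pi/6)$, and the two chained bounds $\sin(C+\pi/6)\le 1$ and $a^2+b^2\ge 2ab$ all check out, and your equality analysis ($C=\pi/3$ together with $a=b$ forces $c=a$) is sound. It is, however, a genuinely different route from both of the paper's proofs. The paper's first proof establishes the vector identity $\|\vec u\|^2+\|\vec v\|^2+\|\vec u+\vec v\|^2-2\sqrt3\,\vec u\wedge\vec v=2\|\vec u+R(\vec v)\|^2$, where $R$ is the rotation by $\pi/3$, so that the whole deficit is exhibited as a single squared norm; the second proof identifies the set of triangles with fixed $a^2+b^2$ with a half-disk in the $(I/2,2\Delta)$-plane and reads the inequality off the tangent line through the origin. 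Note that your quantity $2a^2+2b^2-4ab\sin(C+\pi/6)=2\bigl(a^2+b^2-2ab\cos(\pi/3-C)\bigr)$ is precisely the paper's $2\|\vec u+R(\vec v)\|^2$ written in scalar form; where the paper recognizes it as one nonnegative square (yielding an exact identity, a geometric interpretation of the gap, and the subsequent curvature inequality), you split it into two separate slack terms, one from the sine bound and one from AM--GM. What your version buys is brevity and complete elementarity --- no rotations, no auxiliary plane of triangle shapes --- at the cost of the structural information that is the paper's stated aim.
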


An equivalent vector inequality is proved in \cite{stoica}:

\begin{theorem}
\label{theo2}
(Ionescu-Weitzenb\"{o}ck's vector inequality.) Let \(\vec{u}\), \(\vec{v}\) be two elements of a Euclidean vector space \(X\). Let us denote by \(\langle \vec{u},\vec{v}\rangle \) their inner product and by
\(\vec{u}\wedge \vec{v}\) their determinant in the plane they generate, oriented from \(\vec{u}\) to
\(\vec{v}\) (\(\vec{u}\wedge \vec{v}=0\) if \(\vec{u}\) and \(\vec{v}\) are collinear). We have:
\[||\vec{u}||^2+||\vec{v}||^2+||\vec{u}+\vec{v}||^2
\ge 2\sqrt{3}\vec{u} \wedge \vec{v}
=2\sqrt{3}\sqrt{||\vec{u}||^2||\vec{v}||^2-\langle \vec{u},\vec{v}\rangle ^2},\]
with equality if \(||\vec{u}||=||\vec{v}||=||\vec{u}+\vec{v}||\).
\end{theorem}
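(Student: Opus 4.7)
The most direct plan is to derive Theorem~\ref{theo2} from Theorem~\ref{theo1}. Since $\vec{u}+\vec{v}+\bigl(-(\vec{u}+\vec{v})\bigr)=\vec{0}$, the three vectors $\vec{u}$, $\vec{v}$, $-(\vec{u}+\vec{v})$ close up into a triangle $ABC$: take $\vec{AB}=\vec{u}$ and $\vec{BC}=\vec{v}$, so that $\vec{AC}=\vec{u}+\vec{v}$. Its three side lengths are exactly $||\vec{u}||$, $||\vec{v}||$, $||\vec{u}+\vec{v}||$, so I would apply Theorem~\ref{theo1} to obtain
\[||\vec{u}||^2+||\vec{v}||^2+||\vec{u}+\vec{v}||^2\ge 4\sqrt{3}\,\Delta,\]
where $\Delta$ denotes the area of $ABC$.

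Next I would identify $\Delta$ with $\tfrac{1}{2}\vec{u}\wedge\vec{v}$. The area of $ABC$ is half the determinant of $(\vec{AB},\vec{AC})=(\vec{u},\vec{u}+\vec{v})$, which by bilinearity and antisymmetry equals $\tfrac{1}{2}\vec{u}\wedge\vec{v}$; the orientation convention ``from $\vec{u}$ to $\vec{v}$'' stated in the theorem ensures this quantity is non-negative. The first form of the right-hand side follows. For the second form I would invoke the Gram/Lagrange identity $(\vec{u}\wedge\vec{v})^2=||\vec{u}||^2||\vec{v}||^2-\langle\vec{u},\vec{v}\rangle^2$.

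The main delicate point is the degenerate case where $\vec{u}$ and $\vec{v}$ are collinear: the triangle then collapses to a segment and Theorem~\ref{theo1} no longer applies verbatim. In that case, however, $\vec{u}\wedge\vec{v}=0$ while the left-hand side is manifestly non-negative, so the inequality holds trivially. The equality condition $||\vec{u}||=||\vec{v}||=||\vec{u}+\vec{v}||$ is simply the equilateral equality case of Theorem~\ref{theo1} transported back.

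A self-contained alternative that avoids Theorem~\ref{theo1} is worth keeping in mind: set $p=||\vec{u}||^2$, $q=||\vec{v}||^2$, $r=\langle\vec{u},\vec{v}\rangle$, so the inequality reads $2(p+q+r)\ge 2\sqrt{3}\sqrt{pq-r^2}$. Since the left-hand side is the sum of three squared norms, it is non-negative, so squaring is legitimate and we reduce to the polynomial inequality $4r^2+2(p+q)r+(p^2-pq+q^2)\ge 0$. Viewed as a quadratic in $r$, its discriminant is $4(p+q)^2-16(p^2-pq+q^2)=-12(p-q)^2\le 0$, so the bound follows, with equality forcing $p=q$ and $r=-p/2$, i.e.\ the equilateral condition. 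The only real obstacle is the bookkeeping: making sure the sign of $\vec{u}\wedge\vec{v}$ in the statement matches the unsigned area, and handling the collinear boundary case separately.
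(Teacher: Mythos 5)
Your main argument is correct and is essentially the paper's own derivation: the paper obtains Theorem~\ref{theo2} from Theorem~\ref{theo1} by considering the triangle \(B=\vec{0}\), \(A=-\vec{u}\), \(C=\vec{v}\), which is exactly your closing-up construction, and your treatment of the collinear degenerate case and of the Lagrange identity \((\vec{u}\wedge\vec{v})^2=||\vec{u}||^2||\vec{v}||^2-\langle\vec{u},\vec{v}\rangle^2\) supplies details the paper leaves implicit. Your self-contained alternative (squaring, then checking that the discriminant of the resulting quadratic in \(r=\langle\vec{u},\vec{v}\rangle\) equals \(-12(p-q)^2\le 0\)) is also sound, but note that the paper's independent vector proof goes further by establishing the identity of Theorem~\ref{theo3}, which exhibits the deficit explicitly as \(2||\vec{u}+R(\vec{v})||^2\) with \(R\) the rotation by \(\pi/3\); your discriminant computation yields the inequality and its equality case but not that geometric interpretation of the gap.
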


These theorems are equivalent. However, reference \cite{stoica} only mentions the implication
\ref{theo2}\(\Rightarrow \)\ref{theo1}.
In order to obtain Theorem \ref{theo1} from Theorem \ref{theo2}, denoting by \(A\), \(B\), \(C\)
the vertices of the triangle, we just have to apply it to \(\vec{u}=\overrightarrow{AB}\) and
\(\vec{v}=\overrightarrow{BC}\).
In order to obtain Theorem \ref{theo2} from Theorem \ref{theo1}, we need to consider the vector plane generated by \(\vec{u}\) and \(\vec{v}\) and the triangle \(ABC\),
where \(B=\vec{0}\), \(A=-\vec{u}\), \(C=\vec{v}\).\\

The main steps in the history of this inequality are described in \cite{stoica}. The most important proofs of this result and some of its generalizations can be found in this reference and its bibliography. In each of the next two sections, we will give a new conceptual proof. The first one is a vector proof, the second one is based on the properties of the set of triangles with a fixed ``size'', seen as a half-disk.

\section{The vector proof}

Using simpler arguments than in \cite{stoica}, we will prove the following generalization of Ionescu-Weitzenb\"{o}ck's vector inequality, which provides us a geometric interpretation of the difference between the two sides of the inequality
(it will turn out to be the term \(2||\vec{u}+R(\vec{v})||^2\)):

\begin{theorem}
\label{theo3}
Let \(\vec{u}\), \(\vec{v}\) be two elements of a Euclidean vector space \(X\). Let us denote by
\(\vec{u}\wedge \vec{v}\) their determinant in the plane they generate, oriented from
\(\vec{u}\) to \(\vec{v}\)
(\(\vec{u}\wedge \vec{v}=0\) if \(\vec{u}\) and \(\vec{v}\) are collinear).
Let \(R\) be the rotation of angle \(\pi /3\) of this oriented plane
(of every plane which contains \(\vec{v}\), with any orientation,
if \(\vec{u}\) and \(\vec{v}\) are collinear). We have:
\[||\vec{u}||^2+||\vec{v}||^2+||\vec{u}+\vec{v}||^2
=2(\sqrt{3}\vec{u} \wedge \vec{v}+||\vec{u}+R(\vec{v})||^2)
\ge 2\sqrt{3}\vec{u} \wedge \vec{v},\]
with equality if \(||\vec{u}||=||\vec{v}||=||\vec{u}+\vec{v}||\).
\end{theorem}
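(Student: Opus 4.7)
The plan is to derive the stated equality directly and then read off the inequality as the non-negativity of a squared norm, so the content of the theorem reduces to a single short identity. The inequality itself is exactly Ionescu-Weitzenb\"ock's inequality (Theorem \ref{theo2}); the novelty is making the slack term explicit.

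The key lemma I would prove first is a rotation formula inside the oriented plane generated by \(\vec{u}\) and \(\vec{v}\): if \(R_{\theta}\) is the rotation of angle \(\theta\) of that oriented plane, then
\[\langle \vec{u}, R_{\theta}(\vec{v})\rangle = \cos\theta\,\langle \vec{u},\vec{v}\rangle - \sin\theta\,\vec{u}\wedge\vec{v}.\]
The verification is immediate by choosing a positively oriented orthonormal basis of the plane, writing \(\vec{u}\) and \(\vec{v}\) in coordinates and applying the standard \(2\times 2\) rotation matrix; the sign of the \(\vec{u}\wedge\vec{v}\) term is fixed precisely by the orientation convention in the theorem. In the collinear case, \(\vec{u}\wedge\vec{v}=0\) and the identity reduces to a trivial scalar computation, so the ambiguity in the choice of plane is harmless.

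Specializing to \(\theta=\pi/3\) gives
\[\|\vec{u}+R(\vec{v})\|^2 = \|\vec{u}\|^2+\|\vec{v}\|^2 + \langle\vec{u},\vec{v}\rangle - \sqrt{3}\,\vec{u}\wedge\vec{v}.\]
On the other hand, \(\|\vec{u}+\vec{v}\|^2=\|\vec{u}\|^2+\|\vec{v}\|^2+2\langle\vec{u},\vec{v}\rangle\), so
\[\|\vec{u}\|^2+\|\vec{v}\|^2+\|\vec{u}+\vec{v}\|^2 = 2\bigl(\|\vec{u}\|^2+\|\vec{v}\|^2+\langle\vec{u},\vec{v}\rangle\bigr),\]
and substituting the expression for \(\langle\vec{u},\vec{v}\rangle\) extracted from the previous display yields the claimed equality \(\|\vec{u}\|^2+\|\vec{v}\|^2+\|\vec{u}+\vec{v}\|^2 = 2\bigl(\sqrt{3}\,\vec{u}\wedge\vec{v}+\|\vec{u}+R(\vec{v})\|^2\bigr)\). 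The inequality is then trivial.

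For the equality case, I would check directly: if \(\|\vec{u}\|=\|\vec{v}\|=\|\vec{u}+\vec{v}\|=s\), then polarizing forces \(\langle\vec{u},\vec{v}\rangle = -s^2/2\) and, with the orientation that makes \(\vec{u}\wedge\vec{v}\) positive, \(\vec{u}\wedge\vec{v} = s^2\sqrt{3}/2\); plugging these into the identity above gives \(\|\vec{u}+R(\vec{v})\|^2=0\), i.e.\ \(\vec{u}=-R(\vec{v})\). The only genuine obstacle is the bookkeeping around the orientation of \(\vec{u}\wedge\vec{v}\) and the degenerate collinear case, and both are handled once in the lemma on \(\langle\vec{u},R_{\theta}(\vec{v})\rangle\); after that the theorem is a line of algebra.
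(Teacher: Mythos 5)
Your proof is correct and follows essentially the same route as the paper: both hinge on the identity \(\langle \vec{u},R(\vec{v})\rangle=\tfrac{1}{2}\langle \vec{u},\vec{v}\rangle-\tfrac{\sqrt{3}}{2}\,\vec{u}\wedge \vec{v}\) and then complete the square to exhibit the slack as \(2||\vec{u}+R(\vec{v})||^2\). The only cosmetic difference is that you verify the rotation formula in coordinates for general \(\theta\), whereas the paper obtains it by writing \(\vec{u}\wedge\vec{v}=-\langle\vec{u},R'(\vec{v})\rangle\) for the quarter-turn \(R'\) and decomposing \(R=\tfrac{1}{2}\mathrm{Id}+\tfrac{\sqrt{3}}{2}R'\).
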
		
		
\begin{proof}
Let us first notice that:
\[\vec{u} \wedge \vec{v}=||\vec{u}||||\vec{v}||\sin (\vec{u},\vec{v})
=-||\vec{u}||||\vec{v}||\cos \left ( (\vec{u},\vec{v})+\frac{\pi}{2}\right )\]
\[=-||\vec{u}||||R'(\vec{v})||\cos (\vec{u},R'(\vec{v}))
=-\langle \vec{u},R'(\vec{v}) \rangle,\]
where \(R'\) denotes the rotation of angle \(\pi /2\). Thus:
\[||\vec{u}||^2+||\vec{v}||^2+||\vec{u}+\vec{v}||^2-2\sqrt{3}\vec{u}\wedge \vec{v}
=2(||\vec{u}||^2+||\vec{v}||^2+\langle \vec{u},\vec{v} \rangle-\sqrt{3}\vec{u} \wedge \vec{v})\]
\[=2\left ( ||\vec{u}||^2+||\vec{v}||^2
+\left \langle 2\vec{u},\frac{1}{2}\vec{v}+\frac{\sqrt{3}}{2}R'(\vec{v})
\right \rangle
\right )\]
\[=2(||\vec{u}||^2+||R(\vec{v})||^2+2\langle \vec{u},R(\vec{v})\rangle )
=2||\vec{u}+R(\vec{v})||^2\cdot \]
The case of equality corresponds to: \(\vec{u}=-R(\vec{v})\).
This is equivalent to having the triangle
\((\vec{u}, \vec{v}, -(\vec{u}+\vec{v}))\)
be equilateral (\(||\vec{u}||=||\vec{v}||=||\vec{u}+\vec{v}||\)).
\end{proof}
		
Taking \(\vec{u}=\dot{r}(t)\), \(\vec{v}=-\ddot{r}(t)\) in this theorem, we obtain the following generalization of a theorem of \cite{stoica}, which corresponds to the inequality:
\[2\sqrt{3}K(t)\le 1+||\ddot{r}(t)||^2+||\dot{r}(t)-\ddot{r}(t)||^2\cdot \]

\begin{theorem}
\label{theo4}
Let \(r(t)\) be a parametrized curve in \(\mathbb{R}^3\), with constant velocity
\(||\dot{r}(t)||=1\) and curvature:
\[K(t)=\frac{||\dot{r}(t)\wedge \ddot{r}(t)||}{||\dot{r}(t)||^3}
=||\dot{r}(t)\wedge \ddot{r}(t)||,\]
where \(\wedge \) here denotes the vector cross product. We have:
\[2\sqrt{3}K(t)=1+||\ddot{r}(t)||^2+||\dot{r}(t)-\ddot{r}(t)||^2-2||\dot{r}(t)-R(\ddot{r}(t))||^2
\le 1+||\ddot{r}(t)||^2+||\dot{r}(t)-\ddot{r}(t)||^2,\]
with equality if \(||\ddot{r}(t)||=||\dot{r}(t)-\ddot{r}(t)||=1\),
where \(R\) denotes the rotation of angle \(\pi /3\) of the vector plane
generated by \(\dot{r}(t)\) and \(\ddot{r}(t)\),
oriented from \(\ddot{r}(t)\) to \(\dot{r}(t)\) (of every plane which contains \(\ddot{r}(t)\), with any orientation, if \(\dot{r}(t)\) and \(\ddot{r}(t)\) are collinear).
\end{theorem}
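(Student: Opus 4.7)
The plan is to apply Theorem \ref{theo3} directly with the substitution $\vec{u} = \dot{r}(t)$ and $\vec{v} = -\ddot{r}(t)$, and then translate the resulting identity into the statement of Theorem \ref{theo4}. Using $||\dot{r}(t)|| = 1$, the three norms on the left of Theorem \ref{theo3} become $1$, $||\ddot{r}(t)||^2$, and $||\dot{r}(t)-\ddot{r}(t)||^2$, which already match the first three summands appearing on the right of Theorem \ref{theo4}.

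The delicate step is identifying the two geometric invariants $\vec{u}\wedge\vec{v}$ and $||\vec{u}+R(\vec{v})||^2$ in terms of the data of the curve. For the planar determinant, I would note that $\vec{u}\wedge\vec{v} = ||\vec{u}||\,||\vec{v}||\sin\theta \ge 0$, where $\theta\in[0,\pi]$ is the unoriented angle between $\vec{u}$ and $\vec{v}$; replacing $\vec{v}=-\ddot{r}(t)$ by $\ddot{r}(t)$ sends $\theta\mapsto\pi-\theta$ and leaves $\sin\theta$ unchanged, so $\vec{u}\wedge\vec{v} = ||\dot{r}(t)||\,||\ddot{r}(t)||\sin\angle(\dot{r}(t),\ddot{r}(t)) = ||\dot{r}(t)\wedge\ddot{r}(t)|| = K(t)$. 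For the rotation, the orientation ``from $\vec{u}=\dot{r}$ to $\vec{v}=-\ddot{r}$'' of Theorem \ref{theo3} coincides with the orientation ``from $\ddot{r}$ to $\dot{r}$'' of Theorem \ref{theo4}, because the ordered basis $(\ddot{r},\dot{r})$ is obtained from $(\dot{r},-\ddot{r})$ by swapping the two vectors and negating one, and these two sign changes in the determinant cancel. Hence the operator $R$ in both theorems is the same, and by linearity $\vec{u}+R(\vec{v}) = \dot{r}(t)-R(\ddot{r}(t))$; the degenerate collinear case is covered by the ``any plane containing $\vec{v}$'' clause already built into Theorem \ref{theo3}.

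Plugging these identifications into the identity
\[
||\vec{u}||^2 + ||\vec{v}||^2 + ||\vec{u}+\vec{v}||^2 = 2\sqrt{3}\,\vec{u}\wedge\vec{v} + 2\,||\vec{u}+R(\vec{v})||^2
\]
of Theorem \ref{theo3} and solving for $2\sqrt{3}K(t)$ will yield the claimed equality of Theorem \ref{theo4}; the inequality then follows from $||\dot{r}(t)-R(\ddot{r}(t))||^2 \ge 0$. Finally, the equality condition $||\vec{u}||=||\vec{v}||=||\vec{u}+\vec{v}||$ of Theorem \ref{theo3} translates to $1 = ||\ddot{r}(t)|| = ||\dot{r}(t)-\ddot{r}(t)||$, which is exactly the stated equality case. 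The only real obstacle is careful bookkeeping with the sign and orientation conventions relating the two statements; once these are correctly tracked, the deduction is a direct substitution.
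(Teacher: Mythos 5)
Your proposal is correct and follows exactly the paper's route: the paper derives Theorem \ref{theo4} precisely by substituting \(\vec{u}=\dot{r}(t)\), \(\vec{v}=-\ddot{r}(t)\) into Theorem \ref{theo3}. You have merely made explicit the bookkeeping the paper leaves implicit (the identification \(\vec{u}\wedge\vec{v}=\|\dot{r}(t)\wedge\ddot{r}(t)\|=K(t)\) and the agreement of the two orientation conventions), and these verifications are accurate.
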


Applying Theorem \ref{theo3} to other vectors, as in \cite{stoica}, we can also obtain an identity involving the curvature and the torsion of a curve, which generalizes another inequality
of \cite{stoica}.

\section{The proof based on the half-disk of triangle shapes}

Our second proof of Theorem \ref{theo1} is based on a study of the properties of figures drawn on an abstract plane with coordinates \((a^2+b^2+c^2, \Delta )\), instead of the plane of the triangle of the theorem. In this plane, we will see the set of triangles with fixed \(a^2+b^2\) as a half-disk, and Ionescu-Weitzenb\"{o}ck's inequality as the inequation of a half-plane whose boundary is the tangent line to this half-disk which passes through the origin.

\begin{figure}[h]
\centering
\includegraphics[width=13cm]{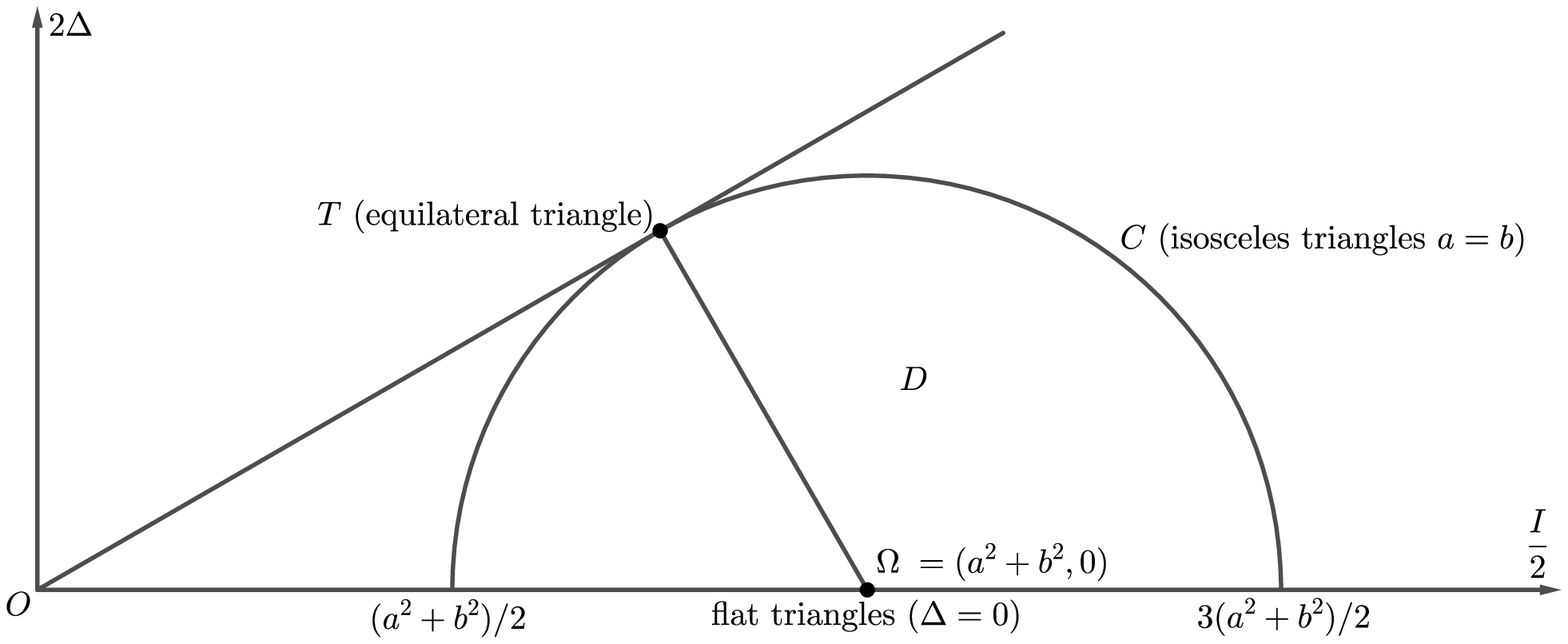}
\end{figure}

\begin{proof}
We have:
\[a^2b^2=(ab\cos (\gamma ))^2+(ab\sin (\gamma ))^2
=\left (\frac{a^2+b^2-c^2}{2} \right )^2+(2\Delta )^2,\]
where \(\gamma \) is the angle opposite the side of length \(c\),
\[\left ( \frac{I}{2}-(a^2+b^2)\right )^2+(2\Delta )^2=a^2b^2
\mbox{, where }I=a^2+b^2+c^2\cdot \]
Fixing the value of \(a^2+b^2\) and \(a^2b^2\), we can associate to each triangle a point 
with coordinates \((I/2>0,2\Delta >0)\) of the figure, located on the half-circle with equation:
\[F\left (\frac{I}{2},2\Delta \right )
=\left (\frac{I}{2}-(a^2+b^2)\right )^2+(2\Delta )^2
=a^2b^2\cdot \]
By the inequality \(ab\le (a^2+b^2)/2\), fixing only the value of \(a^2+b^2\), we can associate to each triangle a point of the half-disk \(D\) with inequation:
\[F\left (\frac{I}{2},2\Delta \right )
=\left (\frac{I}{2}-(a^2+b^2)\right )^2+(2\Delta )^2
\le \left (\frac{a^2+b^2}{2}\right )^2\]
in the quadrant \((I/2>0,2\Delta >0)\). This half-disk has center \(\Omega =(a^2+b^2,0)\)
and radius \((a^2+b^2)/2\). Let \(T\) be its point of contact with the tangent line which passes through the origin \(O\). We have:
\[\sin (\Omega OT)=\frac{\Omega T}{\Omega O}
=\frac{\frac{a^2+b^2}{2}}{a^2+b^2}=\frac{1}{2}\cdot \]
Thus: \(\Omega OT=\pi /6\), and the slope of the tangent line \(OT\) is:
\(\tan (\Omega OT)=1/\sqrt{3}\).
As the half-disk \(D\) is below \(OT\), we have:
\[\frac{2\Delta }{\frac{I}{2}}\le \frac{1}{\sqrt{3}},\]
which gives us Ionescu-Weitzenb\"{o}ck's inequality.\\
The case of equality corresponds to the point \(T\), located on the limit half-circle \(C\) with equation:
\[F\left (\frac{I}{2},2\Delta \right )=a^2b^2
=\left (\frac{a^2+b^2}{2}\right )^2\cdot \]
The equality between the two last expressions is equivalent to: \(a=b\). In other words, the limit half-circle \(C\) is the set of isosceles triangles with base the side of length \(c\). By symmetry, we have, for \(T\): \(a=b=c\). Thus, the case of equality corresponds to the equilateral triangle.
\end{proof}

The equation of the circles of this proof arises from the following system:
\[\left \{
\begin{array}{c}
ab\cos (\gamma )=\frac{a^2+b^2-c^2}{2}\\
ab\sin (\gamma )=2\Delta
\end{array}
\right .\]
In fact, the use of the expressions \(\langle \vec{u},\vec{v} \rangle \)
(instead of \(ab\cos (\gamma )\))
and \(\vec{u}\wedge \vec{v}\) (instead of \(ab\sin (\gamma )\)) in our proof of Theorem \ref{theo3} shows that it was implicitly based on equivalent equations.\\

There are other descriptions of the set of triangles. This note is also an invitation to read
\cite{chenciner}, where the set of triangles with fixed \(I=a^2+b^2+c^2\) is seen as a sphere,
in order to solve problems of mechanics.

\bebi

\bibitem{chenciner}
    A. Chenciner,
    {\em The ``form'' of a triangle},
		Rend. Mat. Appl. (7) 27 (2007), 1-16.\\
		http://www1.mat.uniroma1.it/ricerca/rendiconti/ARCHIVIO/2007(1)/1-16.pdf

\bibitem{stoica}
    E. Stoica, N. Minculete, C. Barbu,
	  {\em New aspects of Ionescu-Weitzenb\"{o}ck's ine\-quality},
	  Balk. J. Geom. Appl. 21, 2 (2016), 95-101.\\
		http://emis.ams.org/journals/BJGA/v21n2/B21-2st-b21.pdf

\eebi

\aua
    Martin Celli\\
    Departamento de Matem\'aticas,\\
	  Universidad Aut\'onoma Metropolitana-Iztapalapa,\\	
		Av. San Rafael Atlixco 186, Col. Vicentina, Alc. Iztapalapa, Mexico City, CP 09340, Mexico.\\
    E-mail: cell@xanum.uam.mx

\end{document}